\newcommand{\R}{\mathbb{R}}
\newcommand{\X}{\mathcal{X}}
\newcommand{\T}{\mathbb{T}}
\newcommand{\norm}[1]{\lVert#1\rVert}
\newtheorem{lemma}{\textbf{Lemma}}
\newtheorem{theorem}{\textbf{Theorem}}
\newtheorem{remark}{\textbf{Remark}}
\newtheorem{assumption}{\textbf{Assumption}}
\def\BibTeX{{\rm B\kern-.05em{\sc i\kern-.025em b}\kern-.08em T\kern-.1667em\lower.7ex\hbox{E}\kern-.125emX}}
\title{\LARGE \bf A Framework for Time-Varying Optimization\\via Derivative Estimation}
\author{
    Matteo Marchi$^{1}$, Jonathan Bunton$^{1}$, João Pedro Silvestre$^{2}$, and Paulo Tabuada$^{1}$
    \thanks{* This work was funded in part by the Army Research Laboratory under Cooperative Agreement W911NF-17-2-0196.}
    \thanks{$^{1}$Matteo Marchi, Jonathan Bunton, and Paulo Tabuada are with the Electrical and Computer Engineering Department, University of California at Los Angeles, Los Angeles, CA 90095 USA (e-mail: {\tt\small \{matmarchi, j.bunton, tabuada\}@ucla.edu}).}
    \thanks{$^{2}$João Pedro Silvestre is in between IST and UCLA joao.pedro.silvestre@tecnico.ulisboa.pt}%
}
\begin{document}

\maketitle
\thispagestyle{empty}
\pagestyle{empty}

\begin{abstract}
Optimization algorithms have a rich and fundamental relationship with ordinary differential equations given by its continuous-time limit.  When the cost function varies with time -- typically in response to a dynamically changing environment -- online optimization becomes a continuous-time trajectory tracking problem.  To accommodate these time variations, one typically requires some inherent knowledge about their nature such as a time derivative.  

In this paper, we propose a novel construction and analysis of a continuous-time derivative estimation scheme based on ``dirty-derivatives'', and show how it naturally interfaces with continuous-time optimization algorithms using the language of ISS (Input-to-State Stability).  More generally, we show how a simple Lyapunov redesign technique leads to provable suboptimality guarantees when composing this estimator with any well-behaved optimization algorithm for time-varying costs.

\end{abstract}

\section{Introduction}
Optimization problems form the basis of a vast array of engineering problems~\cite{sioshansi2017optimization}. While optimization with respect to a static cost function is a classical field of study, more and more applications are being deployed in dynamic environments, leading to real-time unpredictable changes in the optimization cost function~\cite{dall2020optimization}, and any small improvement in this setting is extremely beneficial to a variety of tasks.


Optimization algorithms are commonly analyzed in the discrete-time setting. However, numerous classical algorithms such as gradient descent, Nesterov acceleration~\cite{muehlebach2019dynamical}, and their variants can be viewed as discretization schemes of continuous-time ODEs (Ordinary Differential Equations) \cite{romero2022ode, cisneros2022contraction}. When modeled as such, properties of these algorithms (like convergence and stability) can be analyzed via the many tools of control and nonlinear systems theory, such as Lyapunov stability~\cite{khalil1992nonlinear}. This view point also impacts the study of time-varying or online optimization. Under some convexity assumptions, time-varying optimization can be analyzed as a \emph{trajectory tracking} problem~\cite{simonetto2020time} by determining how closely the resulting solution $x(t)$ tracks the ideal time-varying minimizer trajectory $x^*(t)=\operatorname{argmin}_{x}f(x,t)$, a widely studied problem in continuous-time control theory.

In the time-varying setting, the literature generally takes one of two approaches. The first is to pick an algorithm developed for the time-invariant case and proving that it is robust to some level of time-variation of the cost function. In \cite{popkov2005gradient} the author derives tracking error bounds for the classical gradient descent method, and \cite{yang2016tracking} gives regret bounds in the Online Convex Optimization (OCO) setting under slow variations. The second approach, instead, tries to redesign or adapt an optimization method by directly incorporating knowledge of the time variation. In continuous-time, multiple works add a time-varying correction term to the continuous-time analogue of Newton's method, as under ideal conditions that fully compensates for the time variation \cite{simonetto2020time, rahili2015distributed, gong2016time}, with some works also analyzing time-varying constraints~\cite{fazlyab2017prediction}.

As one might expect, incorporating knowledge of the time variations in the cost can often bring massive performance/stability benefits. However, this typically requires having direct knowledge of the time-variation in the form of a time derivative, such as $\frac{\partial}{\partial t}\nabla_x f(x,t)$. This may not constitute a problem in some applications, such as a control task where we want to track a known reference trajectory, but many others do not allow for this kind of knowledge ahead of time. In~\cite{simonetto2016class}, this time derivative is approximated via finite differences, but their analysis is tailored to their specific discrete-time algorithm and hard to generalize. Another option, pursued in this work, is to study a general derivative estimation technique, and under what conditions the interconnection of such an estimator with an optimization algorithm results in a well-behaved system.

Derivative estimation has a rich history, from numerous variants of finite difference~\cite{khan1999closed, patidar2005use, hassan2012algorithm} to interpolation-based~\cite{krishnan2012selection} schemes. Notably, many observers and state estimation methods from control theory can be adapted to perform derivative estimation (related to the notion of flatness for dynamical systems~\cite{isidori1986sufficient}), of which high-gain observers~\cite{khalil2014high, astolfi2018low} are a notable example. The so-called dirty-derivative originates from PID control~\cite{johnson2005pid, loria2015observers} as a common approximation to implement a ``derivative'' term in the control signal, but has received little interest from a theoretical point of view. This filter is the basis of the extended derivative estimator we introduce in this work. The ability of this construction to interface with the framework we develop is crucial for our analysis.

Our contribution is summarized as follows: 1) We introduce a framework to adapt continuous-time static optimization algorithms to include time-varying knowledge, and establish when this is possible through the language of ISS (Input-to-State Stability) theory~\cite{Sontag2008}. 2) We provide a novel construction of a general derivative estimator based on dirty-derivatives, and we derive explicit IOS (Input-to-Output Stability)~\cite{sontag1999notions} estimation error bounds. 3) We establish that the dynamical system resulting from the interconnection of such an adapted optimization algorithm and our derivative estimator results, also, is an IOS system. This translates to robust behavior and bounds on the time-varying minimizer tracking error $\norm{x(t) - x^*(t)}$.

\section{Problem Statement}\label{sec:problem}
We consider a convex optimization task where the cost function varies with time.  More explicitly, consider a feasible set $\X\subseteq\R^n$, a set of parameter vectors $\Theta \subseteq \R^{p}$ and a time domain $\T\subseteq\R_{\geq 0}$, alongside a cost function $f: \X \times\Theta \to \R$. Assuming that the parameters change with time according to some unknown function $\theta: \T\to\Theta$, we seek to design an Ordinary Differential Equation (ODE):
\begin{align}\label{eq:optimization_ode}
    \dot{x} &= g(x) + u,
\end{align}
with $x\in\X$, $g:\X\to\R^n$, and $u:\T\to\R^n$ such that its solution, $x(t)$, minimizes the time-varying function $f$ for all $t\in\T$. Because the time-varying law of the parameters is unknown, $g$ and $u$ cannot be designed using knowledge of differential properties of $\theta$, such as its time derivative $\dot\theta(t)$, but we may use its current value $\theta(t)$.


We make the following common assumptions:
\begin{assumption}\label{ass:strong_convexity}
    The function $f$ is smooth and strongly convex for every fixed $\overline\theta\in\Theta$. I.e., there exists $\mu\in\R_{>0}$ such that:
    \begin{align*}
        f(x, \overline\theta) &\geq f(x', \overline\theta) + \nabla_x f(x', \overline\theta)^\top (x-x') + \frac{\mu}{2}\Vert x-x'\Vert_2^2,
    \end{align*}
    for all $x, x'\in\X$, and $\theta\in\Theta$.
\end{assumption}

Under these conditions, there exists a (unique) minimizing trajectory described as $x^*(\theta(t)) = \operatorname{argmin}_{x\in\mathcal X} f(x,\theta(t))$, and we then seek to minimize a \textit{tracking error} with respect to this trajectory. I.e., we design $g$ and $u$ in \eqref{eq:optimization_ode} such that $x-x^*$ converges to a neighborhood of the origin, the size of which may depend on the magnitude of the time-variations in $f$.

\section{Online Optimization}
Classically, continuous-time algorithms in the form \eqref{eq:optimization_ode} arise as the continuous limit of iterative algorithms, with the prime example being the \emph{gradient flow} $\dot x = \nabla_x f(x)$.  Multiple works have shown the effectiveness of gradient flows, even in online optimization tasks, when the time variations are slow \cite{popkov2005gradient}.

A related family of methods directly incorporates knowledge of the time-variations in the cost function.  A typical candidate for this approach is the continuous limit of Newton's method:
\begin{align}\label{eq:newton_ode}
    \dot{x} &= -\left(\nabla_{xx} f(x, \theta)\right)^{-1}\nabla_x f(x, \theta) + u.
\end{align}
Here $\nabla_{xx}f(x, \theta)$ denotes the Hessian matrix associated with a time-varying $f(x, \theta)$. When augmenting \eqref{eq:newton_ode} with time variation knowledge, one often considers the correction:
\begin{align}\label{eq:newton_correction}
    u(t) &= -\left(\nabla_{xx} f(x, \theta)\right)^{-1}\nabla_{x\theta}f(x, \theta)\dot\theta,
\end{align}
where $\nabla_{x\theta}f(x, \theta)$ is the partial derivative of the gradient of $f$ with respect to $\theta$. For a detailed analysis of~\eqref{eq:newton_ode}-\eqref{eq:newton_correction}, see~\cite{fazlyab2017prediction}.


Traditionally, the performance of \eqref{eq:optimization_ode} is analyzed purely through the lens of stability, but here we consider the slightly more general view of \emph{input-to-state stability} (ISS)~\cite{Sontag2008}. For a fixed $\theta$, consider the corresponding minimizer $x^*(\theta)$. Then, we say $\eqref{eq:optimization_ode}$ is ISS with respect to $u$ if:
\begin{align}\label{eq:iss_condition}
    \Vert x(t)-x^*(\theta) \Vert \leq \beta(\Vert  x(t_0)-x^*(\theta) \Vert, t-t_0) + \kappa(\Vert u(t) \Vert_\infty),
\end{align}
for all $t_0\leq t\in\T$, where $\kappa$ is a class $\mathcal{K}$ function, $\beta$ is a class $\mathcal{KL}$ function, $\Vert\cdot\Vert$ denotes the usual vector 2-norm, and $\Vert u(t)\Vert_\infty$ is the essential supremum of $u$ over $[t_0,t]$.\footnote{A class $\mathcal{K}$ function $\kappa:\R_{\ge 0}\to\R_{\ge 0}$ is strictly increasing with $\kappa(0) = 0$.  A class $\mathcal{KL}$ function $\beta:\mathbb{R}\times\mathbb{R}\to\mathbb{R}$ is a continuous function such that $\beta(\cdot,s)\in\mathcal{K}$ and $\beta(r,\cdot)$ is strictly decreasing with $\lim_{s\to\infty}\beta(r,s) = 0$.}

ISS has an equivalent Lyapunov characterization. Equation $\eqref{eq:iss_condition}$ is equivalent to the existence of a radially unbounded function $V:\R^n\times\R^p\to\R_{\geq 0}$ with $V(x^*(\theta),\theta) = 0$ and $\alpha_1,\alpha_2,\alpha_3,\alpha_4\in\mathcal K$ that satisfy:
\begin{equation}\label{eq:iss_lyapunov}
\begin{gathered}
    \alpha_1(\Vert x-x^* \Vert) \leq V(x,\theta) \leq \alpha_2(\Vert x-x^* \Vert), \\
    \dot V(x,\theta) = \nabla_xV(x,\theta)\dot x \leq -\alpha_3(V(x, \theta)) + \alpha_4(\Vert u(t)\Vert).
\end{gathered}
\end{equation}
Note that when $u = 0$, the system is (uniformly) globally asymptotically stable.

We observe that, under some mild conditions, any ODE 
\eqref{eq:optimization_ode} that satisfies this ISS property for the family of \emph{time-invariant} optimization problems, can be adapted into a robust ODE for the \emph{time-varying} optimization problem, and formalize this assertion.
\begin{lemma}\label{lem:lyapunov_redesign}
    Consider system~\eqref{eq:optimization_ode} and suppose $g$ is such that for any fixed value of $\theta\in\Theta$ and its corresponding minimizer $x^*(\theta)$, system~\eqref{eq:optimization_ode} is ISS with respect to $u$. Let $V(x,\theta)$ be a corresponding family of Lyapunov functions differentiable with respect to $\theta$ with bounded $\norm{\nabla_\theta V(x,\theta)}<\infty$. Then, if $\theta(t)$ is a smooth function of time, any continuous $u:\T\to\R^n$ satisfying:
    \begin{align}\label{eq:input_condition}
        \nabla_xV(x,\theta) u + \nabla_\theta V(x,\theta)(\dot\theta + d) \leq 0,
    \end{align}
     renders \eqref{eq:optimization_ode} ISS with respect to the signal $d:\T\to\R^p$ when considering the time-varying minimizer $x^*(\theta(t))$. I.e., the following inequality holds for some $\beta\in\mathcal{KL}$ and $\kappa\in\mathcal K$:
    \begin{equation}\label{eq:lemma_optimization_iss}
        \big\Vert x(t)-x^*(\theta(t))\big\Vert \le \beta\left(\big\Vert x(0)-x^*(\theta(0))\big\Vert, t\right) + \kappa(\norm{d}_\infty).
    \end{equation}
\end{lemma}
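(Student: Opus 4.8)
The plan is to reuse the frozen-$\theta$ Lyapunov function $V(x,\theta)$ as an ISS certificate for the time-varying problem, now with $d$ playing the role of the input. The central object is the \emph{total} time derivative of $V$ along a trajectory of \eqref{eq:optimization_ode} when $\theta=\theta(t)$ genuinely varies. Since $\theta(t)$ is smooth, the chain rule applies, and substituting $\dot x = g(x)+u$,
\begin{align*}
    \frac{d}{dt}V(x(t),\theta(t)) = \nabla_x V(x,\theta)\,g(x) + \nabla_x V(x,\theta)\,u + \nabla_\theta V(x,\theta)\,\dot\theta.
\end{align*}
The first term carries the dissipation already guaranteed by the frozen-$\theta$ hypothesis: evaluating \eqref{eq:iss_lyapunov} at $u=0$ and using $\alpha_4(0)=0$ gives $\nabla_x V(x,\theta)\,g(x)\le -\alpha_3(V(x,\theta))$ for every fixed $\theta$. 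This is the only consequence of the frozen-$\theta$ ISS assumption I need.

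The heart of the argument is the Lyapunov-redesign step applied to the two remaining terms. The input condition \eqref{eq:input_condition} states exactly that $\nabla_x V\,u + \nabla_\theta V(\dot\theta + d)\le 0$, hence $\nabla_x V\,u + \nabla_\theta V\,\dot\theta \le -\nabla_\theta V\,d$. Combining this with the frozen-$\theta$ dissipation yields $\dot V \le -\alpha_3(V) - \nabla_\theta V(x,\theta)\,d$, and the hypothesis $\norm{\nabla_\theta V}<\infty$ lets me dominate the cross term by Cauchy--Schwarz, $-\nabla_\theta V\,d \le C\norm{d}$ for some constant $C$. I thus obtain a dissipation inequality $\dot V \le -\alpha_3(V) + \tilde\alpha_4(\norm{d})$ with $\tilde\alpha_4(r)=Cr$ of class $\mathcal K$ --- precisely the ISS-Lyapunov inequality of \eqref{eq:iss_lyapunov}, but now driven by $d$.

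To finish, I would invoke the converse direction of the ISS-Lyapunov characterization (the equivalence between \eqref{eq:iss_lyapunov} and \eqref{eq:iss_condition}) to pass from this dissipation inequality to the trajectory bound \eqref{eq:lemma_optimization_iss}. I anticipate the main obstacle to be the bookkeeping around the moving reference $x^*(\theta(t))$ rather than any deep difficulty: the conversion must yield an estimate in terms of $\norm{x(t)-x^*(\theta(t))}$ at each instant, which requires that the sandwich bounds $\alpha_1(\norm{x-x^*(\theta)})\le V(x,\theta)\le \alpha_2(\norm{x-x^*(\theta)})$ and the decay rate $\alpha_3$ hold uniformly in $\theta$. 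The boundedness of $\nabla_\theta V$ and the smoothness of $\theta(t)$ are exactly what guarantee both that the chain rule is valid and that the effect of the time variation enters only through the bounded, $d$-driven residual; the remaining comparison-lemma argument is then standard.
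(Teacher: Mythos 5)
Your proposal is correct and follows essentially the same route as the paper's own proof: differentiate $V(x(t),\theta(t))$ along the trajectory via the chain rule, extract the dissipation $\nabla_x V\, g(x) \le -\alpha_3(V)$ from the frozen-$\theta$ hypothesis, use condition \eqref{eq:input_condition} to reduce the remaining terms to $-\nabla_\theta V\, d$, bound that linearly using $\sup_{x,\theta}\norm{\nabla_\theta V(x,\theta)}$, and conclude by the ISS-Lyapunov equivalence. Your $\tilde\alpha_4(r) = Cr$ is exactly the paper's $\alpha_d$, so the two arguments coincide step for step.
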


In later sections the signal $d$ will be used to model the estimation error on $\dot\theta$.

\begin{proof}
    We follow the same basic steps as in Lyapunov redesign for ISS~\cite{Sontag2008}.
    Taking the time derivative of $V(x,\theta$) we have:
     \begin{align*}
        \dot{V}(x, \theta) &= \nabla_xV(x,\theta)(g(x) + u) + \nabla_\theta V(x,\theta)\dot\theta \\
        &= \nabla_xV(x,\theta)g(x) + \nabla_xV(x,\theta)u + \nabla_\theta V(x,\theta)\dot\theta \\
        &\leq -\alpha_3(V(x,\theta)) + \nabla_xV(x,\theta)u + \nabla_\theta V(x,\theta)\dot\theta \\
        &\leq -\alpha_3(V(x,\theta)) - \nabla_\theta V(x,\theta)d \\
        &\leq -\alpha_3(V(x,\theta)) +  \alpha_d(\Vert d\Vert),
    \end{align*}
    where we have used \eqref{eq:iss_lyapunov} in the first inequality and defined $\alpha_d(\Vert d \Vert) = \sup_{x,\theta} (\Vert \nabla_\theta V(x,\theta)\Vert) \Vert d \Vert$. Then, the existence of $\beta$ and $\kappa$ satisfying \eqref{eq:lemma_optimization_iss} immediately follows by equivalence with the Lyapunov characterization of ISS. 
\end{proof}


This approach applies generically to any continuous-time algorithm that is asymptotically stable for the entire class of \emph{static} problems encountered, such as strongly convex costs.  In this way, Lemma \ref{lem:lyapunov_redesign} provides a framework for adapting a sufficiently well-behaved continuous-time algorithm to the time-varying scenario.
\begin{remark}
    A choice of $u$ satisfying \eqref{eq:input_condition} always exists as long as $\nabla_x V(x,\theta) \ne 0$ whenever $\nabla_\theta V(x,\theta) \ne 0$. As a special case, this is always satisfied for the choice\footnote{A simple computation shows that $\nabla_x V(x,\theta) = \nabla_{xx} f(x,\theta)\nabla_x f(x,\theta)$ and $\nabla_\theta V(x,\theta) = \nabla_{x\theta} f(x,\theta)\nabla_x f(x,\theta)$. Therefore, $\nabla_{\theta} V \ne 0$ implies $\nabla_x f \ne 0$ which in turns implies $\nabla_x V \ne 0$ because of strong convexity of $f$.} $V(x,\theta) = \frac{1}{2}\norm{\nabla_x f(x,\theta)}^2$. Under Assumption~\ref{ass:strong_convexity} on $f$, this is a Lyapunov function for~\eqref{eq:newton_ode} and the correction~\eqref{eq:newton_correction} always satisfies condition \eqref{eq:input_condition} with $d=0$.
\end{remark}
\begin{remark}
    If condition~\eqref{eq:input_condition} is not always satisfied, we can still obtain a weaker version of Lemma~\ref{lem:lyapunov_redesign} as long as there is a finite threshold $\overline V\in\R_{\ge 0}$ such that $V(x,\theta) > \overline V$ implies $\nabla_x V(x,\theta) \ne 0$. In that case the result reduces to ISpS (input-to-state practical stability).
\end{remark}


\section{Derivative Estimation}\label{sec:main_results}
In Lemma \ref{lem:lyapunov_redesign}, we showed how online optimization methods can be designed by exploiting knowledge of the time variations in the function.  Unfortunately, the class of control inputs in \eqref{eq:input_condition} is characterized by explicit knowledge of $\dot\theta$, which is generally unavailable in the online scenario. To remedy this issue, we consider an extension of the classical continuous-time differentiation scheme, sometimes called \textit{dirty-derivative}~\cite{loria2015observers}, and provide a proof of convergence.

The dirty-derivative operator approximates the derivative of a signal $w:\T \to \R$ and is described by the transfer function:
\begin{equation}\label{eq:dd_simple}
    \frac{\sigma s}{s + \sigma},
\end{equation}
where $\sigma\in\R_{>0}$ determines how fast the filter defined by the transfer function tracks the derivative of $w$.

We generalize~\eqref{eq:dd_simple} to estimate derivatives up to some order $k$, additionally resulting in a flexible choice of error bounds. The proposed estimator follows a recursive structure, where $\widehat W_{i}(s)$ is the Laplace transform of the $i$-th estimated derivative for $i\in\{1,2,\dots,k\}$:
\begin{equation}
\begin{aligned}\label{eq:dirty_der_recursive}
    \widehat W_i(s) &=
        \frac{\sigma^is^i}{(s+\sigma)^i}W(s) + \frac{(s+\sigma)^i-\sigma^i}{s(s+\sigma)^i}\widehat W_{i+1}(s)\\
    \widehat W_k(s) &= \frac{\sigma^ks^k}{(s+\sigma)^k}W(s).
\end{aligned}
\end{equation}
A block diagram of~\eqref{eq:dirty_der_recursive} for $k=3$ is shown in Fig.~\ref{fig:dirty_diagram}. Each derivative estimated by~\eqref{eq:dirty_der_recursive} converges exponentially to a ball around the true derivative whose size is a function of the magnitude of the $(k+1)$-th derivative of $w$.

\begin{figure}[h]
    \centering
    \includegraphics[width=0.5\textwidth]{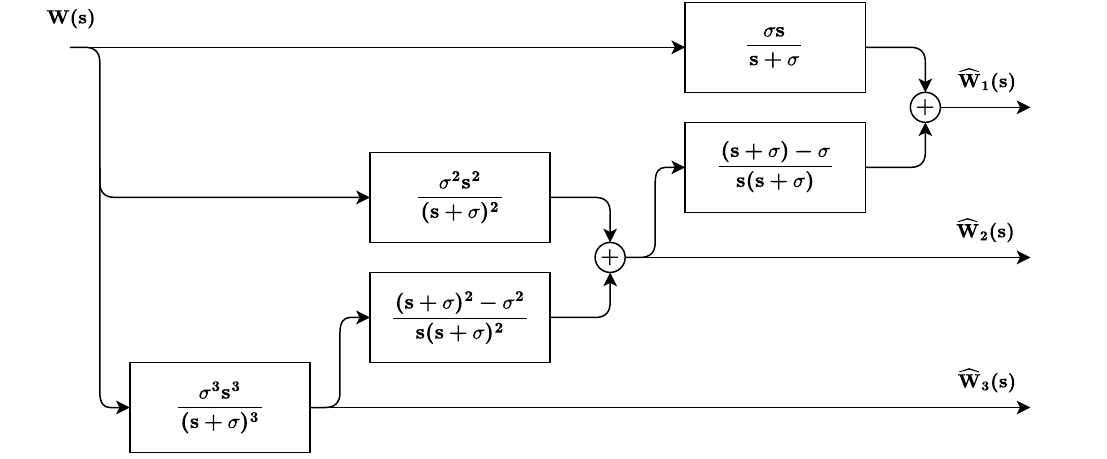}
    \caption{System~\eqref{eq:dirty_der_recursive} estimating the first three derivatives of a signal $U(s)$.}
    \label{fig:dirty_diagram}
    \vspace{-5mm}
\end{figure}

We are now in the position to prove our previous claim for the dirty-derivative-based system~\eqref{eq:dirty_der_recursive}. Note that we use $w^{(i)}$ to denote the $i$-th derivative of $w$.
\begin{theorem}\label{theo:k_dd_diff_operator}
    Let $w:\R_{\ge 0}\to\R$ be a $k+1$ times differentiable input signal and $\widehat w_{i}:\R_{\ge 0}\to\R$ for $i=\{1,2,\dots,k\}$ be the $i$-th estimate produced by~\eqref{eq:dirty_der_recursive} when fed $w$. Then, for all $t \ge 0$:
    \begin{equation} \label{eq:k_dd_guarantee}
        \big\Vert \widehat w_i(t)-w^{(i)}(t)\big\Vert \leq \beta_{i,\sigma}e^{-\gamma_i\sigma t} + \frac{\alpha_i}{\sigma^{k+1-i}}\big\Vert w^{(k+1)}(t)\big\Vert_\infty,
    \end{equation}
    for some $\beta_{i,\sigma}\in\R_{>0}$ dependent on initial conditions, and $\gamma_i, \alpha_i\in\R_{>0}$.
    In particular:
    \begin{equation}
        \lim_{t\to\infty}\norm{\widehat w_i(t) - w^{(i)}(t)} \leq \frac{\alpha_i}{\sigma^{k+1-i}}\norm{w^{(k+1)}(t)}_\infty.
    \end{equation}

    
\end{theorem}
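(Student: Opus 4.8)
The plan is to track the estimation error $e_i(t) = \widehat{w}_i(t) - w^{(i)}(t)$ directly in the Laplace domain, where the recursive structure of \eqref{eq:dirty_der_recursive} collapses into a single clean filter. Writing $E_i = \mathcal{L}\{e_i\}$ and substituting $\widehat W_{i+1}(s) = E_{i+1}(s) + s^{i+1}W(s)$ into the recursion, I expect the feedforward terms proportional to $W(s)$ to cancel exactly, leaving the pure error recursion $E_i(s) = G_i(s)\,E_{i+1}(s)$ with $G_i(s) = \frac{(s+\sigma)^i - \sigma^i}{s(s+\sigma)^i}$. Since $(s+\sigma)^i - \sigma^i$ vanishes at $s=0$, this $G_i$ carries no pole at the origin and is a strictly proper, stable filter with all poles at $-\sigma$. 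For the base case I would similarly compute $E_k(s) = \bigl[\tfrac{\sigma^k s^k}{(s+\sigma)^k} - s^k\bigr]W(s) = -G_k(s)\,s^{k+1}W(s)$, which couples the top estimate's error to the $(k+1)$-th derivative of $w$, the transform $s^{k+1}W(s)$ being $\mathcal{L}\{w^{(k+1)}\}$ up to an initial-value polynomial.

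Telescoping the recursion yields a single transfer function from $w^{(k+1)}$ to $e_i$, namely $E_i(s) = -\bigl[\prod_{j=i}^{k} G_j(s)\bigr]\,s^{k+1}W(s)$. The crucial step is to extract the $\sigma$-dependence by the frequency substitution $s = \sigma z$: each factor obeys $G_j(\sigma z) = \tfrac{1}{\sigma}\,\widetilde{G}_j(z)$ with $\widetilde{G}_j(z) = \frac{(z+1)^j - 1}{z(z+1)^j}$ independent of $\sigma$ and Hurwitz (poles only at $z=-1$). Hence $\prod_{j=i}^k G_j(s) = \sigma^{-(k+1-i)}\,\widetilde{T}_i(s/\sigma)$ for a fixed transfer function $\widetilde{T}_i$, and by the Laplace scaling rule its impulse response $h_i$ satisfies $\norm{h_i}_{L^1} = \sigma^{-(k+1-i)}\norm{\widetilde{h}_i}_{L^1} =: \alpha_i\,\sigma^{-(k+1-i)}$, where $\alpha_i$ is finite and independent of $\sigma$ because $\widetilde{T}_i$ is a strictly proper stable filter whose impulse response is an $L^1$ combination of terms $\tau^m e^{-\tau}$.

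From here I would split the response into its zero-state and zero-input parts. Young's convolution inequality applied to the zero-state response gives $\norm{e_i}_\infty \le \norm{h_i}_{L^1}\,\norm{w^{(k+1)}}_\infty = \frac{\alpha_i}{\sigma^{k+1-i}}\norm{w^{(k+1)}}_\infty$, which is exactly the steady-state term of \eqref{eq:k_dd_guarantee}; the zero-input response, governed entirely by the poles at $-\sigma$, is a polynomial-in-$t$ multiple of $e^{-\sigma t}$ and is therefore dominated by $\beta_{i,\sigma}e^{-\gamma_i \sigma t}$ for any fixed $\gamma_i \in (0,1)$, with $\beta_{i,\sigma}$ absorbing the initial conditions. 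Letting $t\to\infty$ then annihilates the transient and leaves the asymptotic bound.

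I anticipate the main obstacle to be the bookkeeping of initial conditions rather than any deep estimate. The transfer-function identities above are statements about zero-state responses, and the identification of $s^{k+1}W(s)$ with $\mathcal{L}\{w^{(k+1)}\}$ introduces the initial-value polynomial $\sum_{m=0}^{k}s^{m}w^{(k-m)}(0)$; I must verify that all such extra terms, together with the filters' own initial states, contribute only to the exponentially decaying part and do not corrupt the $\sigma^{-(k+1-i)}$ scaling of the steady-state bound. Establishing the $\sigma$-uniform finiteness of $\alpha_i = \norm{\widetilde{h}_i}_{L^1}$ through the clean $s=\sigma z$ rescaling is precisely what forces the exponents in \eqref{eq:k_dd_guarantee} to come out correctly, so I would present that rescaling as the technical heart of the argument.
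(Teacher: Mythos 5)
Your proof is correct, and its first half --- the error recursion $E_k = -G_k(s)s^{k+1}W(s)$, $E_i = G_i(s)E_{i+1}(s)$ --- is exactly the paper's (their $F_i$ is your $G_i$). The technical core, however, is genuinely different. The paper never forms the composite filter $\prod_{j=i}^{k}G_j$: instead it proves an appendix lemma realizing each single stage $F_i$ in controllable canonical form, extracts the $\sigma$-scaling through the state-space coordinate change $z_j=\sigma^{n-j}x_j$, derives a per-stage ISS bound $\norm{e_i(t)}\le\norm{x_{e_i}(0)}b_{i,\sigma}e^{-d_i\sigma t}+\frac{a_i}{\sigma}\norm{e_{i+1}}_\infty$ via the Lyapunov function $z^\top P_n z$ and Gr\"onwall's lemma, and then chains these bounds through the cascade to accumulate the gain $\big(\prod_{j=i}^{k}a_j\big)\sigma^{-(k+1-i)}$. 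You bound the cascade in one shot: your frequency rescaling $G_j(\sigma z)=\sigma^{-1}\widetilde G_j(z)$ is the transfer-function counterpart of the paper's coordinate change, and Young's inequality replaces the Lyapunov/Gr\"onwall machinery. Your route is more elementary and yields a sharper constant, since $\alpha_i=\Vert\widetilde h_i\Vert_{L^1}$ is at most the product of per-stage $L^1$ norms, whereas the paper's $a_j=2\big(\overline\lambda^3_{P_j}/\underline\lambda_{P_j}\big)^{1/2}$ are conservative Lyapunov gains and its stage-by-stage chaining also feeds every transient through all downstream gains. What the paper's route buys is precisely the point you flag as your main obstacle: the internal state $x_{e_i}(0)$ absorbs all initial-condition effects automatically, so no Laplace-domain bookkeeping is needed. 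Your gap does close, and the clean way to close it is to recall that a transfer-function relation holds as an ODE regardless of initialization: writing $T_i=N_i/D_i$ with $T_i(s)=\prod_{j=i}^{k}G_j(s)$, the true error satisfies $D_i(\tfrac{d}{dt})e_i=-N_i(\tfrac{d}{dt})w^{(k+1)}$ for any filter initial state, hence $e_i$ equals the zero-state convolution $-h_i*w^{(k+1)}$ plus a homogeneous solution of $D_i(\tfrac{d}{dt})e_i=0$, i.e.\ a polynomial-in-$t$ multiple of $e^{-\sigma t}$ (all roots of $D_i$ sit at $-\sigma$ after the cancellation of the zero at $s=0$ that you observed). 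This sidesteps the formally improper terms $T_i(s)\sum_{m}s^m w^{(k-m)}(0)$ entirely; as a sanity check, for $k=i=1$ they cancel against the initial-value terms of $\mathcal{L}\{\dot w\}$, leaving $E_1=-\frac{\mathcal{L}\{\ddot w\}}{s+\sigma}+\frac{\sigma w(0)-\dot w(0)}{s+\sigma}$, confirming your claim that initial conditions contaminate only the transient and never the $\sigma^{-(k+1-i)}$ gain.
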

\begin{proof}
    Let us define the estimation error of the $i$-th derivative of $w$ as $e_i = \hat w_i - w^{(i)}$. Then, for $i=k$ the Laplace transform of the error is: 
    \begin{equation}
        \begin{aligned}
            E_k &= \widehat W_k - s^kW = \frac{\sigma^ks^k}{(s+\sigma)^k}W - s^kW\\
            &= \frac{\sigma^k-(s+\sigma)^k}{(s+\sigma^k)}s^kW = -\frac{(s+\sigma)^k-\sigma^k}{s(s+\sigma^k)}s^{k+1}W\\
            &= -F_k(s)s^{k+1}W,
        \end{aligned}
    \end{equation}
    where we define $F_i(s)$ for $i\in\{1,2,\dots,k\}$ as:
    \begin{align}
        F_i(s) &= \frac{(s+\sigma)^i-\sigma^i}{s(s+\sigma)^i}.
    \end{align}
    For $i<k$ the error is:
    \begin{equation}
        \begin{aligned}
            E_i &= \widehat W_i - s^iW\\
            &= \frac{\sigma^is^i}{(s+\sigma)^i}W + \frac{(s+\sigma)^i-\sigma^i}{s(s+\sigma)^i}\widehat W_{i+1} - s^iW\\
            &= \frac{\sigma^i-(s+\sigma)^i}{(s+\sigma^i)}s^iW + \frac{(s+\sigma)^i-\sigma^i}{s(s+\sigma)^i}\widehat  W_{i+1}\\
            &= \frac{(s+\sigma)^i-\sigma^i}{s(s+\sigma)^i}\left(\widehat W_{i+1} - s^{i+1}W\right)\\
            &= \frac{(s+\sigma)^i-\sigma^i}{s(s+\sigma)^i}E_{i+1}\\
            &= F_i(s)E_{i+1}.
        \end{aligned}
    \end{equation}
    Because the transfer function $F_i(s)$ corresponds to the state-space system in Lemma~\ref{lem:single_k_dd_state_space_appendix} (see the appendix\footnote{Inspecting the system defined in Lemma~\ref{lem:single_k_dd_state_space_appendix}, this can be easily verified by computing $C_n(sI-A_n)^{-1}B_n$. Because $A_{n,\sigma}$ is in controllable canonical form, $\det{(sI-A_n)^{-1}} = (s+\sigma)^n$, and due to $C_n$ and $B_n$, we only need to compute the top left cofactor of $sI-A_n$, which equals $s^{-1}\left((s+\sigma)^n-\sigma^n\right)$.}), we can apply its result recursively (denoting the internal state of $F_i$ as $x_{e_i}$):
    \begin{equation*}
        \begin{aligned}
            \norm{e_i(t)} &\leq \norm{x_{e_i}(0)}b_{i,\sigma}e^{-d_i\sigma t} + \frac{a_{i}}{\sigma}\norm{e_{i+1}(t)}\\
            &\leq \norm{x_{e_i}(0)}b_{i,\sigma}e^{-d_i\sigma t} \\&\quad+ \frac{a_{i}}{\sigma}\left(\norm{x_{e_{i+1}}(0)}b_{i+1,\sigma}e^{-d_{i+1}\sigma t} + \frac{a_{i+1}}{\sigma}\norm{e_{i+2}(t)}\right)\\
            &\vdots \\
            &\leq \sum_{j=i}^{k}\left(\left(\prod_{\ell=i}^{j-1}\frac{a_{\ell}}{\sigma}\right)\norm{x_{e_{j}}(0)}b_{j,\sigma}e^{-d_j\sigma t}\right) \\&\quad+ \left(\prod_{j=i}^{k}a_{j}\right)\frac{\norm{w^{(k+1)}(t)}_\infty}{\sigma^{k+1-i}}.
        \end{aligned}
    \end{equation*}
    Because the first term is a sum of exponentially decaying terms with a common $\sigma$ in the exponent, there exist $\beta_{i,\sigma}, \gamma_i, \alpha_i\in\R_{>0}$ such that:
    \begin{equation}
    \begin{gathered}
        \sum_{j=i}^{k}\left(\left(\prod_{\ell=i}^{j-1}\frac{a_{\ell}}{\sigma}\right)\norm{x_{e_{j}}(0)}b_{j,\sigma}e^{-d_j\sigma t}\right) \leq \beta_{i,\sigma}e^{-\gamma_i\sigma t},\\
        \left(\prod_{j=i}^{k}a_{j}\right)\frac{\norm{w^{(k+1)}(t)}_\infty}{\sigma^{k+1-i}} \le \alpha_i\frac{\norm{w^{(k+1)}(t)}_\infty}{\sigma^{k+1-i}}.
    \end{gathered}
    \end{equation}
\end{proof}
\begin{remark}
Although this result is for a scalar signal, it immediately generalizes to estimating $k$-derivatives of an $m$-dimensional signal $w:\T\to\R^m$ by running $m$ copies of the estimator in parallel; one for each scalar component of $w$.
\end{remark}
\begin{remark}
    The asymptotic error bound in Theorem~\ref{theo:k_dd_diff_operator} approaches zero for $\sigma\to\infty$, but with a rate that scales differently with the order of derivative $i\leq k$.  In particular, higher-order derivatives are more difficult to estimate, and the size of the ball to which we are guaranteed to converge scales inversely with powers of the derivative order.
\end{remark}
\begin{remark}
    In the special case where the signal $w:\T\to\R$ is described by a $k$-degree polynomial, the $(k+1)$-th derivative vanishes, and $\big\Vert w^{(k+1)}(t)\big\Vert_\infty = 0$. Theorem \ref{theo:k_dd_diff_operator} then guarantees that the estimates converge exponentially to the exact values of the derivatives of $w$ for any choice of $\sigma$.
\end{remark}

\section{Framework for Time-varying Optimization}
Both the online optimization algorithms designed via Lemma \ref{lem:lyapunov_redesign} and the dirty derivative scheme in Theorem~\ref{theo:k_dd_diff_operator} come with ISS-style convergence guarantees (specifically, Theorem~\ref{theo:k_dd_diff_operator} gives an IOS (input-to-output stability) result~\cite{sontag1999notions}). Because of these results, we can cascade these systems and preserve their convergence properties. In particular, Lemma~\ref{lem:lyapunov_redesign} shows that a well-behaved (in the ISS sense) time-invariant optimization algorithm can be adapted into a time-varying one assuming knowledge of $\dot\theta$. We now present our main contribution, and show that the estimates produced by our dirty-derivative scheme~\eqref{eq:dirty_der_recursive} can be used in place of $\dot\theta$ while still preserving input-to-output stability of the interconnected system. We formalize this notion in the following theorem.


\begin{theorem}\label{cor:final}
    Consider a system~\eqref{eq:optimization_ode} satisfying the assumptions of Lemma \ref{lem:lyapunov_redesign}, and let $\widehat\theta_1:\T\to\Theta$ be the estimate of $\dot\theta$ provided by a dirty-derivative estimator~\eqref{eq:dirty_der_recursive} of order $k$ when fed $\theta$ as input. Then, if $u$ is selected such that:
    \begin{align}\label{eq:input_condition_est}
        \nabla_xV(x,\theta)u + \nabla_\theta V(x,\theta)\widehat\theta_1 \leq 0,
    \end{align}
    the combination of system~\eqref{eq:optimization_ode} and the estimator is IOS (input-to-output stable) with respect to the signal $\sigma^{-k}\theta^{(k+1)}(t)$ when considering the time-varying minimizer $x^*(\theta(t))$. I.e., with outputs $\widetilde x(t) = x(t)-x^*(\theta(t))$ and $e_1 = \widehat\theta_1-\dot\theta$  there exist $\beta'_\sigma\in\mathcal{KL}$ and $\kappa'\in\mathcal K$ such that:
    \begin{equation}\label{eq:main_contribution}
    \begin{aligned}
        &\Big\Vert\Big(\widetilde x(t), e_1(t)\Big)\Big\Vert \le \\&\quad \beta'_\sigma\left(\Big\Vert\Big(\widetilde x(0), x_e(0)\Big)\Big\Vert, t\right) + \kappa'\Big(\sigma^{-k}\big\Vert\theta^{(k+1)}\big\Vert_\infty\Big),
    \end{aligned}
    \end{equation}
    where $x_e$ denotes the full internal state of the dirty-derivative error system.
    
    
\end{theorem}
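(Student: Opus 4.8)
The plan is to recognize the interconnection as a cascade of two subsystems, each of which has already been certified ISS/IOS earlier in the paper, and then combine their bounds through the standard cascade argument.

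First I would make the substitution that links the two results. Writing the estimation error as $e_1 = \widehat\theta_1 - \dot\theta$, the selection rule \eqref{eq:input_condition_est} reads $\nabla_xV(x,\theta)u + \nabla_\theta V(x,\theta)(\dot\theta + e_1) \le 0$, which is \emph{exactly} condition \eqref{eq:input_condition} of Lemma~\ref{lem:lyapunov_redesign} with the disturbance $d$ identified as the estimation error $e_1$. Hence Lemma~\ref{lem:lyapunov_redesign} applies verbatim and yields, for the optimization subsystem, an ISS bound of the form $\norm{\widetilde x(t)} \le \beta_x(\norm{\widetilde x(0)}, t) + \kappa_x(\norm{e_1}_\infty)$ for some $\beta_x\in\mathcal{KL}$ and $\kappa_x\in\K$. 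Next I would quantify $e_1$ itself by invoking Theorem~\ref{theo:k_dd_diff_operator} with $i=1$ (so that $k+1-i=k$), giving $\norm{e_1(t)} \le \beta_{1,\sigma}e^{-\gamma_1\sigma t} + \frac{\alpha_1}{\sigma^{k}}\norm{\theta^{(k+1)}}_\infty$. This is precisely an IOS estimate for the estimator subsystem: its transient is governed by the estimator's internal initial state $x_e(0)$, and its (linear) asymptotic gain acts on the exogenous signal $\sigma^{-k}\theta^{(k+1)}$.

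With these two bounds in hand, the structure is a clean cascade: the exogenous input $\sigma^{-k}\theta^{(k+1)}$ drives the estimator, whose output $e_1$ is the disturbance driving the optimization subsystem, and I would then carry out the classical argument that an ISS-interconnected cascade is itself ISS (IOS in the output sense). Concretely, I would split $[0,t]$ at its midpoint. Over $[t/2,t]$ the peak of $e_1$ is bounded by $\beta_{1,\sigma}e^{-\gamma_1\sigma t/2} + \frac{\alpha_1}{\sigma^k}\norm{\theta^{(k+1)}}_\infty$ using monotonicity of the $\mathcal{KL}$ term; restarting the ISS bound for $\widetilde x$ at $t/2$ (by causality and the semigroup property) then produces a term $\beta_x(\norm{\widetilde x(t/2)}, t/2)$ that also decays in $t$. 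Applying the weak triangle inequality $\kappa(a+b)\le\kappa(2a)+\kappa(2b)$ for class-$\K$ functions, I would separate each estimate into a part decaying to zero in $t$ (collected into $\beta'_\sigma$) and a part depending only on $\sigma^{-k}\norm{\theta^{(k+1)}}_\infty$ (collected into $\kappa'$). Finally, assembling the pair output via $\norm{(\widetilde x, e_1)} \le \norm{\widetilde x} + \norm{e_1}$ and merging the initial-condition dependencies into $\norm{(\widetilde x(0), x_e(0))}$ yields the claimed bound \eqref{eq:main_contribution}.

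I expect the main obstacle to be precisely the cascade bookkeeping in this last step, and specifically guaranteeing that the \emph{composite} transient genuinely decays. The delicate point is that the disturbance $e_1$ fed into the optimization subsystem is itself time-varying with an initial transient peak, so a naive application of the ISS bound over the whole interval $[0,t]$ (which uses $\norm{e_1}_{\infty,[0,t]}$ and therefore retains the early peak) would not yield a decaying composite transient. The time-splitting device is exactly what resolves this: it trades the decayed peak of $e_1$ over the late window $[t/2,t]$ against the decay of the optimization transient over that same window. Once this is handled, everything else reduces to routine composition and sub-additive manipulation of $\mathcal{KL}$ and $\K$ functions.
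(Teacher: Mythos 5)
Your proposal is correct and follows the same route as the paper's proof: rewrite \eqref{eq:input_condition_est} as condition \eqref{eq:input_condition} with $d=e_1$, apply Lemma~\ref{lem:lyapunov_redesign} to get the ISS bound on $\widetilde x$, and bound $e_1$ via Theorem~\ref{theo:k_dd_diff_operator} with $i=1$. The only divergence is the final combination step: the paper disposes of it by citing the nonlinear superposition principle (Sontag, Section 3.1), directly substituting $\overline{\lim}_{t\to\infty}\norm{e_1(t)} \le \frac{\alpha_1}{\sigma^{k}}\big\Vert\theta^{(k+1)}\big\Vert_\infty$ for $\norm{e_1}_\infty$, whereas you carry out explicitly the time-splitting cascade argument that underlies that principle --- your version is more self-contained, and it correctly pinpoints the transient-peak issue (that $\norm{e_1}_{\infty,[0,t]}$ retains the estimator's initial transient) which the paper's citation silently absorbs.
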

\begin{proof}
    We first observe that by~\eqref{eq:input_condition_est}:
    \begin{equation}
    \begin{aligned}
        0 &\ge \nabla_xV(x,\theta)u + \nabla_\theta V(x,\theta)\widehat\theta_1\\
        &= \nabla_xV(x,\theta)u + \nabla_\theta V(x,\theta)(\dot\theta + (\widehat\theta_1 - \dot\theta))\\
        &= \nabla_xV(x,\theta)u + \nabla_\theta V(x,\theta)(\dot\theta + e_1).
    \end{aligned} 
    \end{equation}
    Then, by applying Lemma~\ref{lem:lyapunov_redesign} with $d = e_1$ we can write:
    \begin{equation*}
        \big\Vert x(t)-x^*(\theta(t))\big\Vert \le \beta\left(\big\Vert x(0)-x^*(\theta(0))\big\Vert, t\right) + \kappa(\norm{e_1}_\infty),
    \end{equation*}
    and:
    \begin{equation*}
    \begin{aligned}
        \Big\Vert\Big(\widetilde x(t), e_1(t)\Big)\Big\Vert &\le \Vert \widetilde x(t)\Vert + \norm{e_1(t)}\\
        &= \big\Vert x(t)-x^*(\theta(t))\big\Vert + \norm{e_1(t)}\\
        &\le \beta\left(\big\Vert x(0)-x^*(\theta(0))\big\Vert, t\right) + \kappa(\norm{e_1}_\infty)\\&\qquad + \beta_{1,\sigma}e^{-\gamma_1\sigma t} + \frac{\alpha_1}{\sigma^{k}}\big\Vert\theta^{(k+1)}(t)\big\Vert_\infty.
    \end{aligned}
    \end{equation*}
    By the nonlinear superposition principle~\cite[Section 3.1]{Sontag2008}, only the essential supremum norm for $t\to\infty$ matters, so we can substitute the essential supremum norm $\norm{e_1(t)}_\infty$ by a limsup $\overline{\lim}_{t\to\infty} \norm{e_1(t)} \le \frac{\alpha_1}{\sigma^{k}}\big\Vert\theta^{(k+1)}(t)\big\Vert_\infty$:
    \begin{equation*}
    \begin{aligned}
        &\Big\Vert\Big(\widetilde x(t), e_1(t)\Big)\Big\Vert \le \Big[\beta\left(\big\Vert \widetilde x(0)\big\Vert, t\right) + \beta_{1,\sigma}e^{-\gamma_1\sigma t}\Big]\\&\qquad\qquad + \left[\kappa\left(\frac{\alpha_1}{\sigma^{k}}\big\Vert\theta^{(k+1)}(t)\big\Vert_\infty\right) + \frac{\alpha_1}{\sigma^{k}}\big\Vert\theta^{(k+1)}(t)\big\Vert_\infty\right].
    \end{aligned}
    \end{equation*}
    By composition of class $\mathcal K$ functions, and because $\beta_{1,\sigma}e^{-\gamma_1\sigma t}$ is monotonically decaying with time with $\beta_{1,\sigma}$ dependent on the initial conditions of the internal error system's state, there exist $\beta'_{\sigma}\in\mathcal{KL}$ and $\kappa'\in\mathcal K$ such that~\eqref{eq:main_contribution} holds.

\end{proof}

Theorem~\ref{cor:final}, together with Lemma~\ref{lem:lyapunov_redesign} and our dirty-derivative estimator, provide a general framework to adapt a continuous time-invariant optimization algorithm to an online time-varying algorithm. Notably, explicit time derivative knowledge of $\theta$ is not necessary, since we can approximate it while retaining the stability properties of the ideal algorithm, and the performance of the optimization algorithm can be improved arbitrarily by increasing the gain $\sigma$. However, in practice the presence of measurement noise on $\theta$, or a discrete implementation of the continuous time algorithm will limit how high $\sigma$ can be set. The effect of noise on our proposed construction is explored empirically in the following section.


\vspace{-2mm}
\section{Experiments}
In this section we demonstrate in simulation the effectiveness of our proposed derivative estimation scheme~\eqref{eq:dirty_der_recursive}, and its application to a time-varying optimization task where we use it to estimate the time-varying correction in Lemma~\ref{lem:lyapunov_redesign}.

\subsection{Derivative Tracking}
We first show the system~\eqref{eq:dirty_der_recursive} tracking the first derivative of a sinusoidal signal $w(t) = \sin\left(5t-2\right)$ for $\sigma\in\{5, 20\}$ (see Fig.~\ref{fig:exisgnal}).
\begin{figure}[h]
    \centering
    \vspace{-3mm}
    \includegraphics[width=0.5\textwidth]{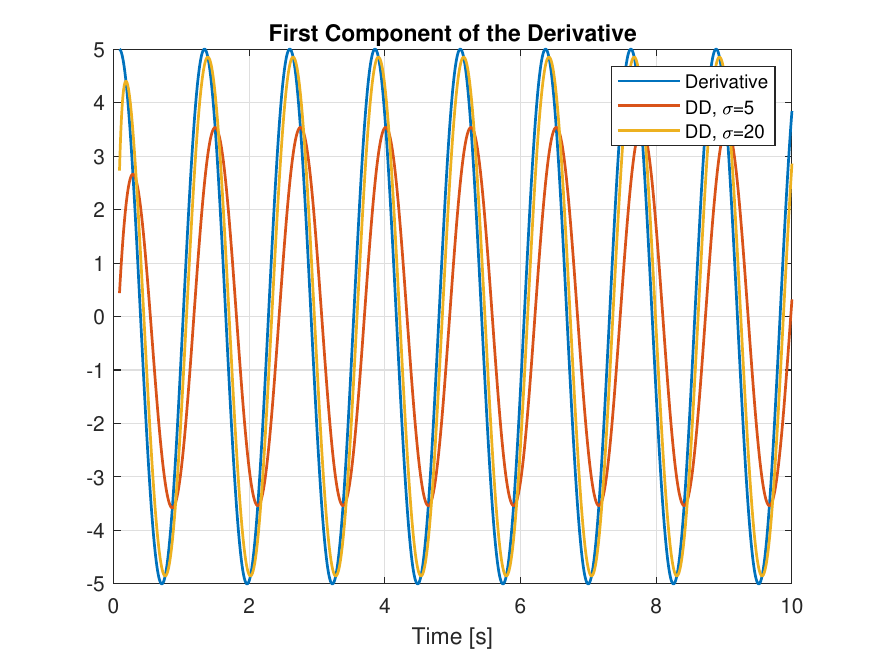}
    \vspace{-5mm}
    \caption{Plot of the first derivative of the signal $w(t)=\sin(5t-2)$ and its estimates for $\sigma\in\{5,20\}$.}
    \label{fig:exisgnal}
\end{figure}
As expected, the tracking is significantly better for the bigger value of $\sigma$. In a practical scenario, we may only have access to a corrupted version of $w$, perturbed by some noise $n(t)$, i.e., $\widetilde{w}(t)=w(t)+n(t)$, and we want our estimation to reject this noise effectively. Noise rejection is practically relevant even when the signal is fully measurable, as a typical time-discretization of~\eqref{eq:dirty_der_recursive} is equivalent to the continuous system working with bounded noise.

In Fig.~\ref{fig:exsignalnoise}, we show first-derivative tracking of the same signal $w(t) = \sin\left(5t-2\right)$ for $\sigma\in\{5, 20\}$ corrupted by zero mean Gaussian noise with $\operatorname{var}\left(n\right)=0.01$.
\begin{figure}[h]
    \centering
    \includegraphics[width=0.5\textwidth]{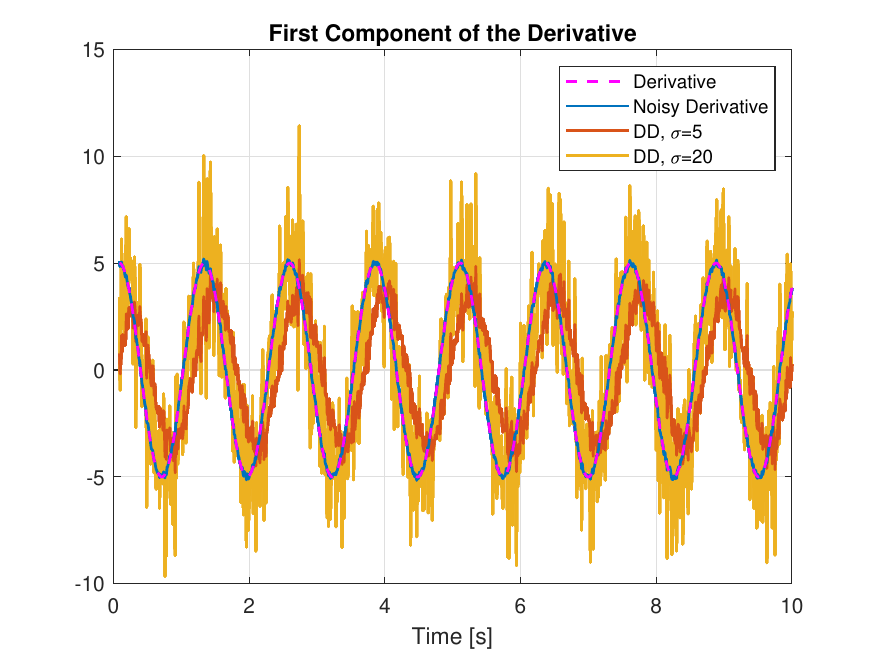}
    \caption{Plot of the first derivative of the signal $w(t)=\sin(5t-2)$ and its estimates for $\sigma\in\{5,20\}$ when $w$ is corrupted by Gaussian noise.}
    \label{fig:exsignalnoise}
\end{figure}

Even with high $\sigma$, the estimator closely tracks the true derivative while rejecting most of the high frequency noise.

\subsection{Time-Varying Optimization with Derivative Estimator}
We now show a simple application of \eqref{eq:dirty_der_recursive} in a time-varying optimization context. We pick a time-varying loss function:
\begin{align*}
    f(x,\theta) &= \frac{1}{2}\lVert x-\theta(t)\rVert^2, \\
    \theta(t) &= \begin{bmatrix}\cos\left(5t-2\right) & \sin\left(5t-2\right) & \cos^2\left(5t-2\right)\end{bmatrix}^T,
\end{align*}
that we minimize using the continuous-time generalization of Newton's method~\eqref{eq:newton_ode} with the time-varying correction~\eqref{eq:newton_correction}.

In the simulation, we consider an \emph{estimated} correction $\widehat u(t) = -\left(\nabla_{xx} f(x, \theta)\right)^{-1}{\nabla_{x\theta}f}(x, \theta)\widehat\theta_1$ where we use our dirty derivative construction \eqref{eq:dirty_der_recursive} to estimate $\dot\theta(t)$ from just the available signal $\theta(t)$. In Fig.~\ref{fig:exloss} we compare our achieved loss over time with the loss achieved using the ideal correction~\eqref{eq:newton_correction}.
\begin{figure}[h]
    \centering
    \vspace{-3mm}
    \includegraphics[width=0.5\textwidth]{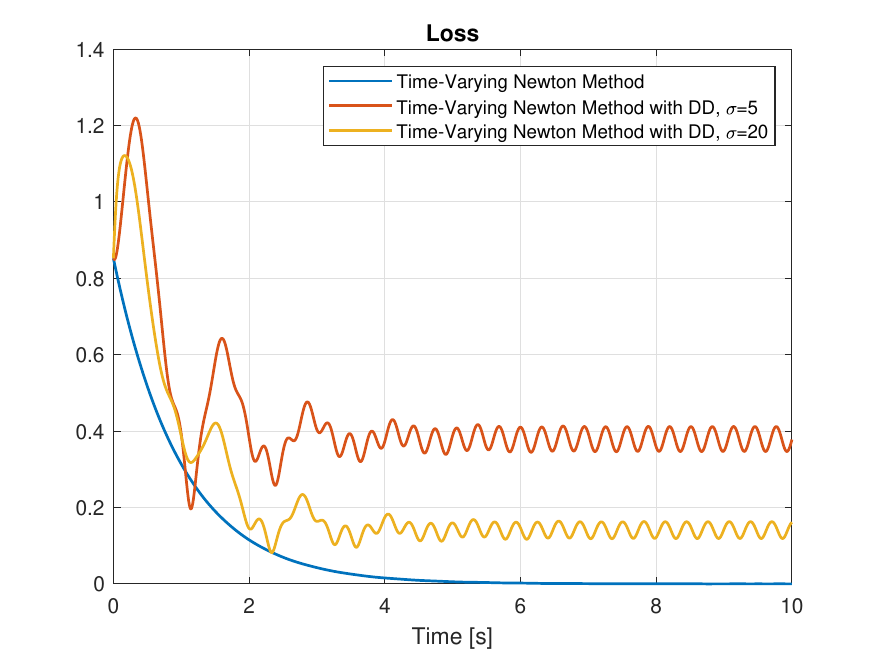}
    \caption{Loss (over time) of the online Newton's method with ground truth (blue) and estimated (red, yellow) time derivative information.}
    \label{fig:exloss}
\end{figure}

Fig. \ref{fig:exloss} shows that our loss trajectory approximates the ideal one, settling closer to zero loss with higher values of $\sigma$. Practically, however, higher values of $\sigma$ tend to amplify the effects of noisy measurements, preventing us from scaling $\sigma$ arbitrarily high. We repeat the experiment with the derivative estimator being fed a signal $\theta(t)$ with each component corrupted by Gaussian noise with zero mean and $0.01$ variance. In Fig.~\ref{fig:exlossnoise} the experiment results show again good robustness to the added noise.
\begin{figure}[h]
    \centering
    \includegraphics[width=0.5\textwidth]{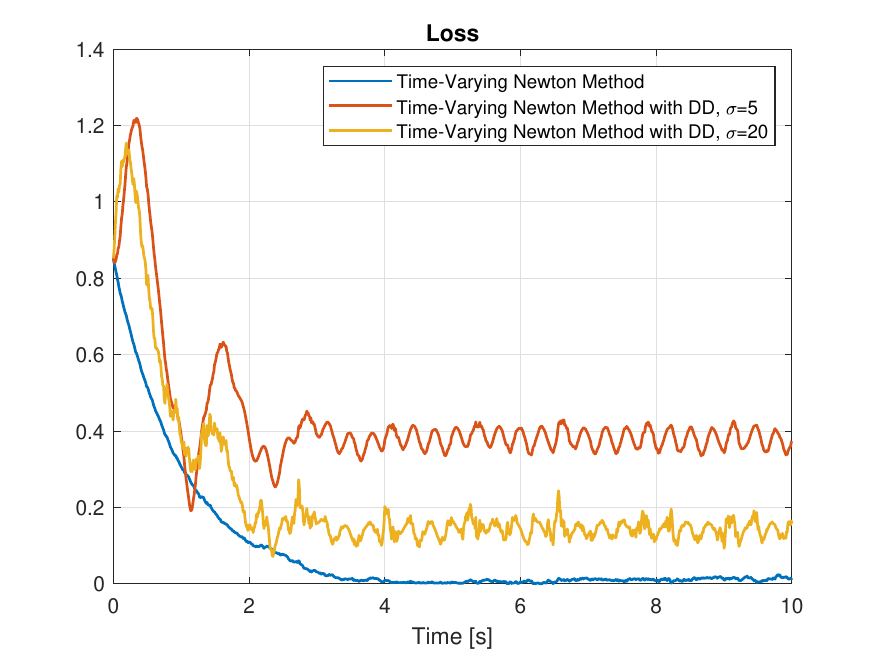}
    \vspace{-5mm}
    \caption{Loss (over time) of the online Newton's method with noise-corrupted ground truth (blue) and estimated (red, yellow) time derivative information.}
    \label{fig:exlossnoise}
    \vspace{-3mm}
\end{figure}

\section{Conclusions}
We have presented a framework to adapt a time-invariant continuous-time optimization algorithm to incorporate a correction term compensating for the time-variation, and showed that popular methods, such as~\eqref{eq:newton_ode}, are suitable for this. We then provided a novel general purpose construction of a derivative estimator with explicit error bounds, and showed that using these estimates in place of true time derivative knowledge in the adapted algorithm results in an IOS system. This provides robustness and convergence results for the tracking error with respect to the true time-varying minimizer of the cost function. Finally, we verified our theory via a set of proof-of-concept simulation experiments.

In future studies we intend to extend the current framework and results to not only the time-varying parameter problem $\theta(t)$ studied here, but to the full closed-loop interaction with a solution dependent parameter vector $\theta(x(t))$.

\appendix
\begin{lemma}\label{lem:single_k_dd_state_space_appendix}
    Consider the LTI SISO dynamical system:
    \begin{equation}\label{eq:single_k_dd_state_space_appendix}
        \begin{cases}
            \dot x(t) &= A_{n,\sigma}x(t) + B_nu(t)\\
            y(t) &= C_nx(t),
        \end{cases}
    \end{equation}
    where $x:\T\to\R^n$, $u, y:\T\to\R$, $\sigma\in\R_{>0}$, and the matrices $A_{n,\sigma}\in\R^{n\times n}$, $B_n\in\R^{n\times 1}$, and $C_n\in\R^{1\times n}$ are:
    \begin{equation*}
        \begin{gathered}
            A_{n,\sigma} = \begin{bmatrix}
                0 & 1 & \dots & 0 \\
                0 & 0 & \dots & 0 \\
                \vdots & \vdots & \ddots & \vdots \\
                0 & 0 & \dots & 1 \\
                -{n\choose 0}\sigma^n & -{n\choose 1}\sigma^{n-1} & \dots & -{n\choose n-1}\sigma
            \end{bmatrix},\\
            B_n = \begin{bmatrix}1 & 0 & \dots & 0\end{bmatrix}^T,\qquad
            C_n = \begin{bmatrix}1 & 0 & \dots & 0\end{bmatrix}.
        \end{gathered}
    \end{equation*}
    Then, there exist $b_{n,\sigma}, d_{n}, a_n \in \R_{>0}$ such that for all $t \in \T$, the output $y$ of~\eqref{eq:single_k_dd_state_space_appendix} satisfies:
    \begin{equation}
        \norm{y(t)} \le \norm{x(0)}b_{n,\sigma}e^{-d_n\sigma t} + \frac{a_n}{\sigma}\norm{u(t)}_\infty.
    \end{equation}
\end{lemma}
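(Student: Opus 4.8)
The plan is to use the variation-of-constants (convolution) solution of the linear system and to bound its two contributions separately. Writing $y(t) = C_n e^{A_{n,\sigma}t}x(0) + \int_0^t C_n e^{A_{n,\sigma}(t-\tau)}B_n\,u(\tau)\,d\tau$, the first (zero-input) term will produce the decaying $\norm{x(0)}b_{n,\sigma}e^{-d_n\sigma t}$ contribution, and the second (zero-state) term will produce the $\frac{a_n}{\sigma}\norm{u}_\infty$ contribution once I bound it by the $L^1$ norm of the impulse response $h(t) = C_n e^{A_{n,\sigma}t}B_n$, using $\big|\int_0^t h(t-\tau)u(\tau)\,d\tau\big| \le \norm{u}_\infty \int_0^\infty |h(s)|\,ds$.

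The crucial observation, which makes the dependence on $\sigma$ explicit and supplies the exact $1/\sigma$ scaling of the input gain, is a diagonal similarity transform. Setting $D_\sigma = \operatorname{diag}(1,\sigma,\dots,\sigma^{n-1})$, a direct check on the superdiagonal entries and on the companion-form bottom row (whose entries are $-\binom{n}{k}\sigma^{n-k}$) shows $D_\sigma^{-1}A_{n,\sigma}D_\sigma = \sigma A_{n,1}$; that is, $A_{n,\sigma}$ is, up to state rescaling, a pure time-rescaling of the fixed Hurwitz matrix $A_{n,1}$, whose eigenvalues all sit at $-1$. Because $C_n D_\sigma = C_n$ and $D_\sigma^{-1}B_n = B_n$ (both equal the first unit vector, which $D_\sigma$ fixes), the impulse response satisfies $h(t) = C_n e^{\sigma A_{n,1}t}B_n = h_1(\sigma t)$, where $h_1$ is the impulse response of the $\sigma = 1$ system. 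Hence $\int_0^\infty |h(t)|\,dt = \frac{1}{\sigma}\int_0^\infty |h_1(\tau)|\,d\tau =: \frac{a_n}{\sigma}$, with $a_n$ finite (as $A_{n,1}$ is Hurwitz) and independent of $\sigma$, which is exactly the claimed input term.

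For the zero-input term I would write $C_n e^{A_{n,\sigma}t} = C_n e^{\sigma A_{n,1}t}D_\sigma^{-1}$ and bound $\norm{e^{\sigma A_{n,1}t}} \le P(\sigma t)e^{-\sigma t}$ for a polynomial $P$ of degree at most $n-1$, reflecting the generically defective Jordan structure at the repeated eigenvalue $-1$. Fixing any $d_n \in (0,1)$, the polynomial factor is absorbed by the $\sigma$-free constant $Q_n := \sup_{s\ge 0}P(s)e^{-(1-d_n)s} < \infty$ (independent of $\sigma$ since $s = \sigma t$ ranges over all of $\R_{\ge 0}$); together with $\norm{D_\sigma^{-1}}$ this yields $\norm{C_n e^{A_{n,\sigma}t}} \le b_{n,\sigma}e^{-d_n\sigma t}$ with $b_{n,\sigma} = Q_n\norm{D_\sigma^{-1}}$. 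Combining the two estimates gives the stated bound.

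The main obstacle is precisely the bookkeeping of the $\sigma$-dependence: $A_{n,\sigma}$ is in general non-diagonalizable (all $n$ eigenvalues coincide at $-\sigma$), so a naive eigenvalue bound leaves a polynomial-in-$t$ prefactor, and one must verify that, after time rescaling, this prefactor spoils neither the claimed $a_n/\sigma$ input gain nor the clean exponential rate. The similarity $D_\sigma^{-1}A_{n,\sigma}D_\sigma = \sigma A_{n,1}$ is what resolves this, decoupling the fixed shape of the response (captured by the $\sigma$-free constants $a_n$, $Q_n$ and rate $d_n$) from the $\sigma$-scaling (captured by the time dilation and by $\norm{D_\sigma^{-1}}$ inside $b_{n,\sigma}$).
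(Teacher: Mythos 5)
Your proof is correct, and it shares the paper's one crucial idea -- the diagonal rescaling that exposes the $\sigma$-dependence -- but completes the argument by a genuinely different route. The paper performs the same change of variables (there written as $z_i = \sigma^{n-i}x_i$, a scalar multiple of your $D_\sigma^{-1}$), obtaining $\dot z = \sigma A_{n,1} z + \sigma^{n-1}B_n u$, and then finishes with a Lyapunov argument: it takes $P_n$ solving $A_{n,1}^\top P_n + P_n A_{n,1} = -I$, bounds $\dot V_n$ for $V_n(z) = z^\top P_n z$, applies Gr\"onwall's lemma, and undoes the coordinate change, yielding constants expressed through $\overline\lambda(P_n)$ and $\underline\lambda(P_n)$. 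You instead use variation of constants: the zero-state response is handled by the $L^1$ norm of the impulse response together with the time-dilation identity $h(t) = h_1(\sigma t)$ (valid because $D_\sigma$ fixes $B_n$ and $C_n$), and the zero-input response by the nilpotency of $A_{n,1}+I$ (Cayley--Hamilton gives $(A_{n,1}+I)^n = 0$, so $\norm{e^{A_{n,1}s}} \le P(s)e^{-s}$ with $\deg P \le n-1$), absorbing the polynomial into a slightly slower exponential. The trade-offs: your route makes the $1/\sigma$ input gain completely transparent (it is literally a change of integration variable) and yields a sharper decay rate -- any $d_n < 1$, versus the paper's $1/(4\overline\lambda(P_n))$, which is typically much smaller than $1$ -- while the paper's Lyapunov route avoids Jordan-structure considerations, produces fully explicit constants from a single matrix $P_n$, and stays within the ISS/Lyapunov vocabulary used for the rest of the paper's results. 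One cosmetic remark: the defectiveness of $A_{n,1}$ is not merely generic -- a companion matrix has geometric multiplicity one for every eigenvalue, so the Jordan block at $-1$ is always full -- but your bound covers this worst case anyway, so nothing breaks.
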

\begin{proof}
    We first introduce some notation. Let $\overline\lambda(\cdot)$ and $\underline\lambda(\cdot)$ respectively denote the highest and lowest eigenvalues of a symmetric matrix. Let us define $A_n\in\R^{n\times n}$ as:
    \begin{equation*}
        A_{n} = \begin{bmatrix}
            0 & 1 & 0 & \dots & 0 \\
            0 & 0 & 1 & \dots & 0 \\
            \vdots & \vdots & \vdots & \ddots & \vdots \\
            0 & 0 & 0 & \dots & 1 \\
            -{n\choose 0} & -{n\choose 1} & -{n\choose 2} & \dots & -{n\choose {n-1}}
        \end{bmatrix}.
    \end{equation*}
    Note that because $A_n$ and $A_{n,\sigma}$ are in companion form, their characteristic polynomials are $(\lambda + 1)^n$ and $(\lambda + \sigma)^n$, respectively. Then, let $P_n$ be the positive definite solution of the Lyapunov equality $A_n^TP_n + P_nA_n = -I$. $P_n$ exists and is unique, as all the eigenvalues of $A_n$ are negative.
    
    To analyze the properties of~\eqref{eq:single_k_dd_state_space_appendix}, we introduce the following change of variables:
    \begin{equation}
        \begin{aligned}
            z_1 &= \sigma^{n-1}x_1\\
            z_2 &= \sigma^{n-2}x_2\\
            &\vdots\\
            z_n &= x_n.
        \end{aligned}
    \end{equation}
    Then, the dynamics of $z$ can be written as:
    \begin{equation}
    \begin{aligned}
        \dot z(t) &= \sigma A_n z(t) + \sigma^{n-1} B_n u(t)\\
        &= \sigma \begin{bmatrix}
            0 & 1 & \dots & 0\\
            0 & 0 & \ddots & \vdots\\
            \vdots & \vdots & \ddots & 1\\
            -{n \choose 0} & -{n \choose 1} & \dots & -{n \choose n-1}
        \end{bmatrix} z(t) + \sigma^{n-1} \begin{bmatrix}
            1\\0\\\vdots\\0
        \end{bmatrix} u(t).
    \end{aligned}
    \end{equation}
    Let us now introduce the Lyapunov function $V_n:\R^n\to\R_{\ge 0}$ defined as $V_n(z) = z^TP_nz$ and compute its time derivative (we omit the $t$ argument on $V_n$, $z$, and $u$ for clarity):
    \begin{equation}
        \begin{aligned}
            \dot V_n &= \dot z^T P_n z + z^T P_n \dot z\\
            &= (\sigma A_n z + \sigma^{n-1}B_n u)^TP_n z + z^TP_n(\sigma A_n z + \sigma^{n-1}B_n u)\\
            & = -\sigma z^Tz + 2\sigma^{n-1}uB_n^TP_nz\\
            &\leq -\sigma\norm{z}^2 + \frac{\sigma}{2}\norm{z}^2 + \frac{2}{\sigma}\norm{B_n}^2\norm{P_n}^2\sigma^{2(n-1)}u^2\\
            &\leq -\frac{\sigma}{2\overline\lambda(P_n)}V_n + \frac{2\overline\lambda^2(P_n)}{\sigma}\sigma^{2(n-1)}u^2,
        \end{aligned}
    \end{equation}
    where the third equality holds because $P_n$ is the solution of $A_n^TP_n + P_nA_n$.  Then, by Gr\"onwall's Lemma it holds that:
    \begin{equation}
    \begin{aligned}
        V_n(t) &\leq V_n(0)e^{-\frac{\sigma}{2\overline\lambda_{P_n}}t} + \frac{4\overline\lambda^3_{P_n}}{\sigma^2}\sigma^{2(n-1)}u^2(t)\\
        \implies \norm{z(t)} &\leq \norm{z(0)}\left(\frac{\overline\lambda_{P_n}}{\underline\lambda_{P_n}}\right)^{\frac{1}{2}}e^{-\frac{\sigma}{4\overline\lambda_{P_n}}t} \\&\qquad\qquad + 2\left(\frac{\overline\lambda_{P_n}^3}{\underline\lambda_{P_n}}\right)^{\frac{1}{2}}\sigma^{n-2}\norm{u(t)}_\infty.
    \end{aligned}
    \end{equation}
    From the definition of $y$ and reversing the change of variables we can observe that:
    \begin{equation}
    \begin{gathered}
        \norm{y} = \norm{x_1} = \frac{\norm{z_1}}{\sigma^{n-1}} \leq \frac{\norm{z}}{\sigma^{n-1}},\\
        \norm{z(0)}\leq \max\left(1, \sigma^{n-1}\right)\norm{x(0)},
    \end{gathered}
    \end{equation}
    so:
    \begin{equation}
    \begin{aligned}
        \norm{y(t)} &\leq \max\left(\frac{1}{\sigma^{n-1}}, 1\right)\norm{x(0)}\left(\frac{\overline\lambda_{P_n}}{\underline\lambda_{P_n}}\right)^{\frac{1}{2}}e^{-\frac{\sigma}{4\overline\lambda_{P_n}}t}\\&\qquad+ 2\left(\frac{\overline\lambda_{P_n}^3}{\underline\lambda_{P_n}}\right)^{\frac{1}{2}}\frac{\norm{u(t)}_\infty}{\sigma}.
    \end{aligned}
    \end{equation}
    Then, if we define $b_{n,\sigma}\in\mathcal{KL}_\infty$ and $a_n\in\R_{>0}$ as:
    \begin{equation}
    \begin{gathered}
        b_{n,\sigma}(r,t) = r\max\left(1, \sigma^{n-1}\right)\left(\frac{\overline\lambda_{P_n}}{\underline\lambda_{P_n}}\right)^{\frac{1}{2}}e^{-\frac{\sigma}{4\overline\lambda_{P_n}}t},\\
        a_{n} = 2\left(\frac{\overline\lambda_{P_n}^3}{\underline\lambda_{P_n}}\right)^{\frac{1}{2}},
    \end{gathered}
    \end{equation}
    we can finally write:
    \begin{equation}
        \norm{y(t)} \leq b_{n,\sigma}\left(\norm{x(0)}, t\right) + a_{n}\frac{\norm{u(t)}_\infty}{\sigma}.
    \end{equation}
\end{proof}

\addtolength{\textheight}{-12cm}   




\bibliographystyle{plain}
\bibliography{bibliography}

\begin{thebibliography}{10}

\bibitem{astolfi2018low}
Daniele Astolfi, Lorenzo Marconi, Laurent Praly, and Andrew~R Teel.
\newblock Low-power peaking-free high-gain observers.
\newblock {\em Automatica}, 98:169--179, 2018.

\bibitem{cisneros2022contraction}
Pedro Cisneros-Velarde and Francesco Bullo.
\newblock A contraction theory approach to optimization algorithms from acceleration flows.
\newblock In {\em International Conference on Artificial Intelligence and Statistics}, pages 1321--1335. PMLR, 2022.

\bibitem{dall2020optimization}
Emiliano Dall'Anese, Andrea Simonetto, Stephen Becker, and Liam Madden.
\newblock Optimization and learning with information streams: Time-varying algorithms and applications.
\newblock {\em IEEE Signal Processing Magazine}, 37(3):71--83, 2020.

\bibitem{fazlyab2017prediction}
Mahyar Fazlyab, Santiago Paternain, Victor~M Preciado, and Alejandro Ribeiro.
\newblock Prediction-correction interior-point method for time-varying convex optimization.
\newblock {\em IEEE Transactions on Automatic Control}, 63(7):1973--1986, 2017.

\bibitem{gong2016time}
Ping Gong, Fei Chen, and Weiyao Lan.
\newblock Time-varying convex optimization for double-integrator dynamics over a directed network.
\newblock In {\em 2016 35th Chinese Control Conference (CCC)}, pages 7341--7346. IEEE, 2016.

\bibitem{hassan2012algorithm}
Hassan~Zohair Hassan, AA~Mohamad, and GE~Atteia.
\newblock An algorithm for the finite difference approximation of derivatives with arbitrary degree and order of accuracy.
\newblock {\em Journal of Computational and Applied Mathematics}, 236(10):2622--2631, 2012.

\bibitem{isidori1986sufficient}
Alberto Isidori, CH~Moog, and A~De~Luca.
\newblock A sufficient condition for full linearization via dynamic state feedback.
\newblock In {\em 1986 25th IEEE Conference on Decision and Control}, pages 203--208. IEEE, 1986.

\bibitem{johnson2005pid}
Michael~A Johnson and Mohammad~H Moradi.
\newblock {\em PID control}.
\newblock Springer, 2005.

\bibitem{khalil2014high}
Hassan~K Khalil and Laurent Praly.
\newblock High-gain observers in nonlinear feedback control.
\newblock {\em International Journal of Robust and Nonlinear Control}, 24(6):993--1015, 2014.

\bibitem{khalil1992nonlinear}
H.K. Khalil.
\newblock {\em Nonlinear Systems}.
\newblock Macmillan Publishing Company, 1992.

\bibitem{khan1999closed}
Ishtiaq~Rasool Khan and Ryoji Ohba.
\newblock Closed-form expressions for the finite difference approximations of first and higher derivatives based on taylor series.
\newblock {\em Journal of Computational and Applied Mathematics}, 107(2):179--193, 1999.

\bibitem{krishnan2012selection}
Sunder~Ram Krishnan and Chandra~Sekhar Seelamantula.
\newblock On the selection of optimum savitzky-golay filters.
\newblock {\em IEEE transactions on signal processing}, 61(2):380--391, 2012.

\bibitem{loria2015observers}
Antonio Lor{\'\i}a.
\newblock Observers are unnecessary for output-feedback control of {L}agrangian systems.
\newblock {\em IEEE Transactions on Automatic Control}, 61(4):905--920, 2015.

\bibitem{muehlebach2019dynamical}
Michael Muehlebach and Michael Jordan.
\newblock A dynamical systems perspective on nesterov acceleration.
\newblock In {\em International Conference on Machine Learning}, pages 4656--4662. PMLR, 2019.

\bibitem{patidar2005use}
Kailash~C Patidar.
\newblock On the use of nonstandard finite difference methods.
\newblock {\em Journal of Difference Equations and Applications}, 11(8):735--758, 2005.

\bibitem{popkov2005gradient}
A~Yu Popkov.
\newblock Gradient methods for nonstationary unconstrained optimization problems.
\newblock {\em Automation and Remote Control}, 66:883--891, 2005.

\bibitem{rahili2015distributed}
Salar Rahili and Wei Ren.
\newblock Distributed continuous-time convex optimization with time-varying cost functions.
\newblock {\em IEEE Transactions on Automatic Control}, 62(4):1590--1605, 2017.

\bibitem{romero2022ode}
Orlando Romero, Mouhacine Benosman, and George~J Pappas.
\newblock {ODE} discretization schemes as optimization algorithms.
\newblock In {\em 2022 IEEE 61st Conference on Decision and Control (CDC)}, pages 6318--6325. IEEE, 2022.

\bibitem{simonetto2020time}
Andrea Simonetto, Emiliano Dall'Anese, Santiago Paternain, Geert Leus, and Georgios~B Giannakis.
\newblock Time-varying convex optimization: Time-structured algorithms and applications.
\newblock {\em Proceedings of the IEEE}, 108(11):2032--2048, 2020.

\bibitem{simonetto2016class}
Andrea Simonetto, Aryan Mokhtari, Alec Koppel, Geert Leus, and Alejandro Ribeiro.
\newblock A class of prediction-correction methods for time-varying convex optimization.
\newblock {\em IEEE Transactions on Signal Processing}, 64(17):4576--4591, 2016.

\bibitem{sioshansi2017optimization}
Ramteen Sioshansi, Antonio~J Conejo, et~al.
\newblock Optimization in engineering.
\newblock {\em Cham: Springer International Publishing}, 120, 2017.

\bibitem{Sontag2008}
Eduardo~D. Sontag.
\newblock {\em Input to State Stability: Basic Concepts and Results}, pages 163--220.
\newblock Springer Berlin Heidelberg, Berlin, Heidelberg, 2008.

\bibitem{sontag1999notions}
Eduardo~D Sontag and Yuan Wang.
\newblock Notions of input to output stability.
\newblock {\em Systems \& Control Letters}, 38(4-5):235--248, 1999.

\bibitem{yang2016tracking}
Tianbao Yang, Lijun Zhang, Rong Jin, and Jinfeng Yi.
\newblock Tracking slowly moving clairvoyant: Optimal dynamic regret of online learning with true and noisy gradient.
\newblock In {\em International Conference on Machine Learning}, pages 449--457. PMLR, 2016.

\end{thebibliography}

\end{document}